
\documentclass[final,3p,times,authoryear]{elsarticle}



\usepackage{amssymb}
\usepackage{amsmath}


\geometry{a4paper,left=2cm,right=2cm,top=1.5cm,bottom=1.5cm}

\usepackage{natbib}
\setcitestyle{numbers,square}
\usepackage{bbm}
\usepackage{color}
\usepackage{amsmath}
\usepackage{amsthm}
\usepackage{amssymb}
\usepackage{enumerate}

\usepackage{graphicx}
\usepackage{xcolor}
\usepackage{amsthm}
\usepackage{enumerate}

\newtheorem{Theorem}{Theorem}[section]
\newtheorem{Corollary}[Theorem]{Corollary}

\begin{document}

\begin{frontmatter}



\title{Boundary behavior of continuous-state interacting multi-type branching
processes with immigration} 


\author[label1]{Peng Jin}

\affiliation[label1]{organization={Guangdong Provincial/Zhuhai Key Laboratory of IRADS, and Department of Mathematical Sciences, Beijing
Normal-Hong Kong Baptist University},
            city={Zhuhai 519087}, 
            country={China}}
\ead{pengjin@bnbu.edu.cn}

\author[label2,label3]{Jiaqi Zhou}
\affiliation[label2]{organization={Hong Kong Baptist University},
             country={Hong Kong} } 
             
\affiliation[label3]{organization={ Beijing
Normal-Hong Kong Baptist University},
             city={Zhuhai 519087},             
             country={China}}
\ead{s230209704@mail.bnbu.edu.cn}         
\begin{abstract}
In this paper, we study continuous-state interacting multi-type branching
processes with immigration (CIMBI processes), where inter-specific
interactions---whether competitive, cooperative, or of a mixed type---are
proportional to the product of their type-population masses. We establish
sufficient conditions for the CIMBI process to never hit the boundary
$\partial\mathbb{R}_{+}^{d}$ when starting from the interior of $\mathbb{R}_{+}^{d}$.
Additionally, we present two results concerning boundary attainment.
In the first, we consider the diffusion case and prove that when the
constant immigration rate is small and diffusion noise is present
in each direction, the CIMBI process will almost surely hit the boundary
$\partial\mathbb{R}_{+}^{d}$. In the second result, under similar
conditions on the constant immigration rate and diffusion noise, but
with jumps of finite activity, we show that the CIMBI process 
hits the boundary $\partial\mathbb{R}_{+}^{d}$ with positive probability.
\end{abstract}

\begin{keyword}
Multi-type continuous-state branching process \sep immigration \sep interaction \sep boundary behavior \sep extinction

\MSC 60J80 \sep  60H20

\end{keyword}

\end{frontmatter}



\section{Introduction}

\numberwithin{equation}{section}

Continuous-state branching processes with immigration, CBI processes
for short, were introduced in \citep{MR290475,MR279902} and they
arise naturally as scaling limits of Galton-Watson branching processes
with immigration as shown in \citep{MR2225068}. \citep{MR2134113}
first considered a logistic branching process featuring intra-specific
competition with a quadratic death rate. More general continuous-state
branching processes with competition were introduced in \citep{MR3877546}
and have received much attention in recent years. For the single-type
case, \citep{MR3414448,MR4383204,MR4139087,MR4332223} investigated
the extinction probability, while explosion was studied in \citep{MR3940763}
and also in \citep{ma2024explosion} for similar models in a Lévy
environment. \citep{MR3983343} introduced a class of nonlinear branching
processes and provided criteria for coming down from infinity, extinction
and explosion respectively, see also \citet{MR3983343,MR4236676}.
Very recently, the strong Feller property and exponential convergence
rate to quasi-stationary distribution were proved in \citep{MR4790450}
for CB processes with competition that is strong enough near infinity,
while sufficient conditions for exponential ergodicity of CBI processes
with competition were established in \citet{MR4863048}.

In contrast with the single-type case, works concerning multi-type
continuous-state branching processes with competition or more general
interaction mechanisms have been sparse so far. \citet{MR3208120}
first introduced a two-type CBI process with intra-specific competition
as the unique strong solution of a SDE with jumps and established
a comparison principle. \citet{cattiaux2010competitive} considered
a two-dimensional stochastic Lotka-Volterra system with intra-specific
competition and inter-specific cooperation or competition and studied
its long-time behavior conditioned on non-extinction. In \citet{Fittipaldi16082025},
continuous-state interacting multi-type branching processes (CIMBP
for short) with full generality were studied and, among many other
things, the authors proved that CIMBP can be obtained by a Lamperti-type
transformation of multi-dimensional Lévy processes.

In this paper, we introduce and study continuous-state interacting
multi-type branching processes with immigration (CIMBI processes),
adding the effect of immigration to CIMBP model introduced in \citet{Fittipaldi16082025}.
From another point of view, a CIMBI process can also be obtained by
adding interaction to a general multi-type CBI process. We focus on
the construction of a CIMBI process as the unique strong solution
of a SDE with jumps in $\mathbb{R}_{+}^{d}$ and also investigate
its boundary behavior at $\partial\mathbb{R}_{+}^{d}$. We are interested
in three types of boundary behaviors: 1) the process never hits the
boundary $\partial\mathbb{R}_{+}^{d}$; 2) the process hits the boundary
$\partial\mathbb{R}_{+}^{d}$ almost surely; 3) the process hits $\partial\mathbb{R}_{+}^{d}$
with positive probability. For each behavior we provide some easy
to check sufficient conditions, see Section 3 for details.

The boundary behavior of one-type CB (or CBI in general) processes
with or without competition can be studied using martingale methods
or Lyapunov function technique and has already been addressed in many
previous works, see \citet{G74,CPU13,DFM14,FU14,MR2134113,MR3414448,MR3980150,MR3983343,MR4139087,MR4236676}
and the references therein. Extending these results to multi-dimensions
is not straightforward, since the powerful martingale methods don't
seem to work in this case. Nevertheless, based on a comparison principle,
sufficient conditions for boundary non-attainment of multi-type CBI
processes were given in \citet{MR4195178}. The extinction time for
multi-type CB processes was recently investigated in \citet{MR4575008}
using a Lamperti-type transformation. Unfortunately the methods in
\citet{MR4195178,MR4575008} don't apply to general CIMBI processes.
To study the boundary behavior of CIMBI processes introduced in this
paper, we use the Foster-Lyapunov type criteria from \citet{MR4419569}
for boundary non-attainment, while for boundary attainment we use
some ideas from \citet{cattiaux2010competitive} where a stochastic
Lotka-Volterra system on $\mathbb{R}_{+}^{2}$ is transformed to a
Kolmogorov diffusion.

The structure of this paper is organized as follows. In Section 2,
we introduce our model and prove the existence of a unique strong
solution to the corresponding SDE. In Section 3.1, the behavior of
not hitting the boundary is studied by a Foster-Lyapunov type criteria
in \citep[Proposition 2.1]{MR4419569}. In Section 3.2, we first provide
some sufficient conditions for a CIMBI diffusion to hit the boundary
almost surely. Then we extend it to the case of finite Lévy measures
in the branching and immigration mechanisms; however, as a compensation,
in this case we are only able to show that the boundary is hit with
positive probability. At the end of Section 3.2, we show that for
competitive interaction, the conditions for boundary attainment can
be slightly relaxed.

\section{Preliminaries}

Let $\left(\Omega,\mathcal{F},(\mathcal{F}_{t})_{t\ge0},\mathbb{P}\right)$
be a filtered probability space satisfying the usual hypotheses on
which the following objects are defined:
\begin{itemize}
\item $W(t)=(W_{1}(t),...,W_{d}(t))$ is a standard $d$-dimensional $\mathcal{F}_{t}$-Brownian
motion;
\item $N_{i}\left(ds,du,dz\right)$ is a Poisson random measure on $\mathbb{R}_{+}\times\mathbb{R}_{+}\times\mathbb{R}_{+}^{d}$
with intensity $dsdu\mu_{i}\left(dz\right)$ and compensated random
measure $\tilde{N}_{i}\left(ds,du,dz\right)=N_{i}\left(ds,du,dz\right)-dsdu\mu_{i}\left(dz\right)$,
where $\mu_{i}$ is a Borel measure on $\mathbb{R}_{+}^{d}$ satisfying
\[
\int_{\mathbb{R}_{+}^{d}}\left(z_{i}\wedge z_{i}^{2}+\sum_{j\in\{1,2,...,d\}\backslash\{i\}}z_{j}\right)\mu_{i}(dz)<\infty,\quad\mu_{i}(\{0\})=0,\quad\text{ for }i\in\{1,2,...,d\}.
\]
Additionally, for each $U\in\mathcal{B}(\mathbb{R}_{+}\times\mathbb{R}_{+}^{d})$,
$N_{i}\left((0,t]\times U\right)$ is $\mathcal{F}_{t}$-adapted and
if $t>s\ge0$, then $N_{i}\left((s,t]\times U\right)$ is independent
of $\mathcal{F}_{s}$;
\item $N_{0}(ds,dz)$ is a Poisson random measure on $\mathbb{R}_{+}\times\mathbb{R}_{+}^{d}$
with intensity $ds\nu\left(dz\right)$ and compensated random measure
$\tilde{N}_{0}\left(ds,dz\right)=N_{0}\left(ds,dz\right)-ds\nu\left(dz\right)$,
where $\nu$ is a Borel measure on $\mathbb{R}_{+}^{d}$ satisfying
$\int_{\mathbb{R}_{+}^{d}}|z|\nu(dz)<\infty$ and $\nu(\{0\})=0$.
Additionally, for each $U\in\mathcal{B}(\mathbb{R}_{+}^{d})$, $N_{0}\left((0,t]\times U\right)$
is $\mathcal{F}_{t}$-adapted and if $t>s\ge0$, then $N_{0}\left((s,t]\times U\right)$
is independent of $\mathcal{F}_{s}$.
\item $W,N_{0},N_{1},\ldots,N_{d}$ are mutually independent.
\end{itemize}
Consider the following stochastic differential equation with jumps
in $\mathbb{R}_{+}^{d}$: for $i\in\{1,...,d\}$,
\begin{equation}
\begin{aligned}X_{i}(t)= & x_{i}+\int_{0}^{t}\bigg(\eta_{i}+\sum_{j=1}^{d}b_{ij}X_{j}(s)+\gamma_{i}\left(X(s)\right)\bigg)ds+\int_{0}^{t}\sqrt{2\sigma_{i}X_{i}(s)}dW_{i}(s)+\int_{0}^{t}\int_{\mathbb{R}_{+}^{d}}z_{i}N_{0}\left(ds,dz\right)\\
 & +\int_{0}^{t}\int_{\mathbb{R}_{+}}\int_{\mathbb{R}_{+}^{d}}z_{i}\mathbbm{1}_{\{u\leq X_{i}(s-)\}}\tilde{N}_{i}\left(ds,du,dz\right)+\sum_{j\neq i}\int_{0}^{t}\int_{\mathbb{R}_{+}}\int_{\mathbb{R}_{+}^{d}}z_{i}\mathbbm{1}_{\{u\leq X_{j}(s-)\}}N_{j}\left(ds,du,dz\right),
\end{aligned}
\label{ndimCBIC}
\end{equation}
where
\begin{itemize}
\item $x=(x_{1},\ldots,x_{d}),\,\eta:=(\eta_{1},...,\eta_{d})\text{ and }\sigma:=(\sigma{}_{1},...,\sigma_{d})\text{ all belong to }\mathbb{R}_{+}^{d}$;
\item $B:=(b_{ij})_{i,j\in\{1,2,...,d\}}$ is such that $b_{ij}\geq0$ for
$j\neq i$ and $\gamma(x)=(\gamma_{1}(x),...,\gamma_{d}(x))$ with
$\gamma_{i}(x)=\sum_{j=1}^{d}c_{ij}x_{i}x_{j}$, where $c_{ij}$ are
constants and $c_{ii}<0$ for $i=1,\ldots,d$.
\end{itemize}
The matrix $c:=(c_{ij})_{1\le i,j\le d}$ is the so-called interaction
matrix. Note that $c_{ii}<0$ indicates intra-specific competitions.
Depending on the signs of the off-diagonal entries of the interaction
matrix, there are three regimes of interactions:
\begin{enumerate}
\item competition: $c_{ij}\le0$ for all $i\neq j$;
\item cooperation: $c_{ij}\ge0$ for all $i\neq j$;
\item mixed-type: there exist $c_{ij}$ and $c_{\tilde{i}\tilde{j}}$ with
$i\neq j,\tilde{i}\neq\tilde{j}$ and $c_{ij}c_{\tilde{i}\tilde{j}}<0$.
\end{enumerate}
Following similar arguments as in \citep{Fuzongfei2010spa,MR3208120,Fittipaldi16082025},
we can obtain the following theorem. \begin{Theorem}\label{pathwise unique strong solution}
Assume that
\begin{align}
\sum_{i=1}^{d}\gamma_{i}(x)\le0,\quad x\in\mathbb{R}_{+}^{d}.\label{interaction condition}
\end{align}
Then the SDE (\ref{ndimCBIC}) has a unique $\mathbb{R}_{+}^{d}$-valued
strong solution. \end{Theorem}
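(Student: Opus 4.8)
The plan is to follow the standard programme for non-negative jump-diffusion SDEs developed in \cite{Fuzongfei2010spa,MR3208120,Fittipaldi16082025}. First I would prove pathwise uniqueness for \eqref{ndimCBIC}; then I would obtain a unique non-negative strong solution for a sequence of truncated equations (whose coefficients are globally Lipschitz apart from the square-root diffusion, to which the Yamada--Watanabe/Fu--Li theory applies), paste these together using pathwise uniqueness, and finally rule out explosion by means of the structural assumption \eqref{interaction condition}. The delicate point is this last, non-explosion step, since the quadratic interaction is exactly what could otherwise drive the total mass to infinity in finite time.

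For pathwise uniqueness I would take two solutions $X$ and $Y$ driven by the same noises $W,N_0,N_1,\dots,N_d$ and apply It\^o's formula to $\phi_k\big(X_i(t)-Y_i(t)\big)$, where $(\phi_k)$ is the usual Yamada--Watanabe sequence of smooth functions increasing to $|\cdot|$ with $0\le\phi_k''(r)\le\tfrac{2}{kr}$ on a shrinking interval near the origin. The $\tfrac12$-H\"older bound $|\sqrt{2\sigma_i x}-\sqrt{2\sigma_i y}|^2\le 2\sigma_i|x-y|$ is precisely what forces the second-order Brownian term to vanish as $k\to\infty$. For the jump integrals I would use the Fu--Li device: the integrands depend on the state only through the indicators $\mathbbm{1}_{\{u\le X_i(s-)\}}$, and since $\int_0^\infty|\mathbbm{1}_{\{u\le a\}}-\mathbbm{1}_{\{u\le b\}}|\,du=|a-b|$, the jump contributions become Lipschitz in the state once the $u$-integration is carried out, the moment hypotheses on $\mu_i$ and $\nu$ guaranteeing that the compensated and uncompensated integrals are well defined. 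Because the interaction drift $\gamma_i$ is only locally Lipschitz, I would run this estimate up to $\tau_n=\inf\{t:|X(t)|\vee|Y(t)|\ge n\}$, on which $\gamma$ is Lipschitz; summing over $i$, letting $k\to\infty$ and invoking Gronwall's inequality then yields $X=Y$ on $[0,\tau_n]$ and hence on $[0,\infty)$.

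For existence I would truncate, replacing $\gamma$ by a bounded globally Lipschitz modification $\gamma^{(n)}$ that agrees with $\gamma$ on $\{|x|\le n\}$ and, if needed, cutting off the large jumps, so that the results of \cite{Fuzongfei2010spa} produce a unique non-negative strong solution $X^{(n)}$ of the truncated equation; by pathwise uniqueness the $X^{(n)}$ agree up to $\tau_n$ and paste into a solution defined up to $\tau=\lim_n\tau_n$. Non-negativity I would read off directly from the boundary structure of \eqref{ndimCBIC}: on $\{x_i=0\}$ the diffusion coefficient $\sqrt{2\sigma_i x_i}$ vanishes, the jumps in the $i$-th coordinate driven by $N_i$ cannot occur since they require $u\le X_i(s-)=0$, the interaction term $\gamma_i(x)=x_i\sum_j c_{ij}x_j$ vanishes, the remaining drift $\eta_i+\sum_{j\ne i}b_{ij}x_j$ is nonnegative and points inward, and every jump that can still affect the $i$-th coordinate has nonnegative size $z_i$; hence $X_i$ cannot cross below $0$.

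The crux, and the place where \eqref{interaction condition} is used, is the proof that $\tau=\infty$ almost surely. Here I would apply It\^o's formula to the total mass $\sum_i X_i(t\wedge\tau_n)$ and take expectations: since this functional is linear there are no It\^o correction terms, the Brownian and compensated-jump integrals are martingales and drop out, and the surviving drift consists of the linear contributions $\sum_i\eta_i+\sum_{i,j}b_{ij}X_j+\sum_j X_j\int\sum_{i\ne j}z_i\,\mu_j(dz)+\int|z|\,\nu(dz)$ together with the interaction term $\sum_i\gamma_i(X)$. Controlling $\sum_i\gamma_i$ through \eqref{interaction condition} leaves a drift that does not exceed an affine function of $\sum_i X_i$, so Gronwall's inequality gives $\sup_n\mathbb{E}\big[\sum_i X_i(t\wedge\tau_n)\big]<\infty$ for every $t$, which forces $\tau_n\uparrow\infty$ and completes the construction of the global non-negative strong solution. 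I expect this non-explosion estimate to be the main obstacle, and assumption \eqref{interaction condition} is exactly what makes the affine bound on the drift of the total mass available.
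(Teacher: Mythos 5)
Your proposal is correct and follows essentially the same route as the paper: pathwise uniqueness via the Yamada--Watanabe/Fu--Li argument (which the paper delegates to \citep[Theorem 3.2]{Fittipaldi16082025}), existence via truncated equations glued along the stopping times $\tau_m$ by pathwise uniqueness, and non-explosion from a first-moment Gronwall bound on the total mass $\sum_i X_i$ using \eqref{interaction condition}. One remark: both you and the paper use \eqref{interaction condition} as giving an \emph{upper} (affine) bound on the drift of the total mass, which requires reading it as $\sum_{i=1}^{d}\gamma_{i}(x)\le 0$; as printed ($\ge 0$) it cannot hold, since $c_{jj}<0$ forces $\sum_{i}\gamma_{i}(x)=c_{jj}x_{j}^{2}<0$ whenever $x$ is supported on a single coordinate $j$, so this is evidently a sign typo in the paper and your usage matches its intent.
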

\begin{proof}
The pathwise uniqueness can be obtained using the same proof of \citep[Theorem 3.2]{Fittipaldi16082025}.
For $m\in\mathbb{N}$, consider the truncated equation
\begin{align}
X_{i}^{m}(t) & =x_{i}+\int_{0}^{t}\bigg(\eta_{i}+\sum_{j=1}^{d}b_{ij}\cdot X_{j}^{m}(s)\wedge m+\gamma_{i}\left(X^{m}(s)\wedge m\right)-\alpha_{i}(m)\cdot X_{i}^{m}(s)\wedge m\bigg)ds+\int_{0}^{t}\sqrt{2\sigma_{i}\cdot X_{i}^{m}(s)\wedge m}dW_{i}(s)\nonumber \\
 & +\int_{0}^{t}\int_{\mathbb{R}_{+}^{d}}z_{i}\wedge mN_{0}\left(ds,dz\right)+\int_{0}^{t}\int_{\mathbb{R}_{+}}\int_{\mathbb{R}_{+}^{d}}z_{i}\wedge m\mathbbm{1}_{\{u\leq X_{i}^{m}(s-)\wedge m\}}\tilde{N}_{i}\left(ds,du,dz\right)\nonumber \\
 & +\sum_{j\neq i}\int_{0}^{t}\int_{\mathbb{R}_{+}}\int_{\mathbb{R}_{+}^{d}}z_{i}\wedge m\mathbbm{1}_{\{u\leq X_{j}^{m}(s-)\wedge m\}}N_{j}\left(ds,du,dz\right),\label{eq: truncated sde}
\end{align}
where $\alpha_{i}(m)=\int_{\mathbb{R}_{+}^{d}}\big(z_{i}-z_{i}\wedge m\big)\mu_{i}(dz)$
and $X^{m}(t)\wedge m=(X_{1}^{m}(t)\wedge m,...,X_{d}^{m}(t)\wedge m)$.
Then \eqref{eq: truncated sde} has a unique $\mathbb{R}_{+}^{d}$-valued
strong solution, using a similar argument to \citep[Theorem 3.1]{Fittipaldi16082025}
(see also \citep[Theorem 3.2]{MR3208120}). Then repeating the arguments
in \citep[Proposition 2.4]{Fuzongfei2010spa}, $X^{m}(t)$ has the
same path as $X^{n}(t)$ on $[0,\tau_{m})$ if $n\ge m$, where $\tau_{m}:=\inf\{t\geq0:\max_{i\in\{1,...,d\}}X_{i}(t)\geq m\}$,
and existence of a strong solution to (\ref{ndimCBIC}) follows if
we can show that $\tau_{m}\nearrow+\infty$ a.s. as $m\to\infty$.
Thanks to condition (\ref{interaction condition}), we can find a
constant $K>0$ such that
\begin{align*}
\mathbb{E}\bigg[1+\sum_{i=1}^{d}X_{i}^{m}(t\wedge\tau_{m})\bigg]\leq & 1+\sum_{i=1}^{d}x_{i}+\mathbb{E}\bigg[\int_{0}^{t}2K+2K\sum_{i=1}^{d}X_{i}^{m}(s\wedge\tau_{m})ds\bigg].
\end{align*}
By Gronwall's inequality,
\[
\mathbb{E}[1+\sum_{i=1}^{d}X_{i}^{m}(t\wedge\tau_{m})]\leq\left(1+\sum_{i=1}^{d}x_{i}\right)\exp2Kt.
\]
Noting that $\sum_{i=1}^{d}X_{i}^{m}(\tau_{m})\ge m$ by right-continuity
of sample paths, we obtain, for each $t\ge0$,
\[
(1+m)\mathbb{P}\{\tau_{m}\leq t\}\leq\left(1+\sum_{i=1}^{d}x_{i}\right)\exp{2Kt},
\]
which implies that $\tau_{m}\nearrow+\infty$ a.s as $m\to\infty$.
\end{proof}

\section{Behavior at the boundary $\mathbb{R}_{+}^{d}$}

Throughout this section we assume that condition \eqref{interaction condition}
is true.

\subsection{Boundary non-attainment}

\begin{Theorem} Consider the SDE (\ref{ndimCBIC}). Suppose that
$x_{i}>0$ for $i=1,\ldots,d$. If $\eta_{i}>\sigma_{i}$ for each
$i\in\{1,...,d\}$, then
\begin{align}
\mathbb{P}[X_{i}(t)>0,\enspace\forall t>0]=1,\enspace\text{for each }i\in\{1,...,d\}.\label{nonattainment}
\end{align}
In addition, if $\eta_{i}=\sigma_{i}$ and
\[
\int_{|z|\leq1}z_{i}\mu_{i}(dz)<\infty
\]
for each $i\in\{1,...,d\}$, then assertion (\ref{nonattainment})
also holds. \end{Theorem}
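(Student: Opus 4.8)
The plan is to verify the hypotheses of the Foster--Lyapunov criterion \cite[Proposition 2.1]{MR4419569}: for each fixed coordinate $i$ I would exhibit a nonnegative function $V$ on the interior of $\mathbb{R}_{+}^{d}$ that blows up on the face $\{x_{i}=0\}$ and satisfies $\mathcal{A}V\le C_{m}V$ on suitable localizing sets, where $\mathcal{A}$ is the generator of \eqref{ndimCBIC}. Reading off the drift, the diffusion coefficient $2\sigma_{i}x_{i}$, and compensating only the self-branching jumps (the $-z_{j}\partial_{j}f$ term, reflecting that in \eqref{ndimCBIC} just the $j$-th component of $N_{j}$ is compensated) gives
\[
\mathcal{A}f(x)=\sum_{i=1}^{d}\Big(\eta_{i}+\sum_{j=1}^{d}b_{ij}x_{j}+\gamma_{i}(x)\Big)\partial_{i}f+\sum_{i=1}^{d}\sigma_{i}x_{i}\partial_{i}^{2}f+\int_{\mathbb{R}_{+}^{d}}\big(f(x+z)-f(x)\big)\nu(dz)+\sum_{j=1}^{d}x_{j}\int_{\mathbb{R}_{+}^{d}}\big(f(x+z)-f(x)-z_{j}\partial_{j}f\big)\mu_{j}(dz).
\]
Applying $\mathcal{A}$ to functions of the single variable $x_{i}$ annihilates all mixed derivatives, so only the $i$-th drift, the $i$-th diffusion term, and the jumps in direction $i$ survive.

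For the choice of $V$ I would treat the two regimes separately. When $\eta_{i}>\sigma_{i}$, set $V(x)=x_{i}^{-q_{i}}$ with $0<q_{i}<\eta_{i}/\sigma_{i}-1$ (any $q_{i}>0$ if $\sigma_{i}=0$); the drift and diffusion then contribute the leading term $q_{i}x_{i}^{-q_{i}-1}\big(\sigma_{i}(q_{i}+1)-\eta_{i}\big)$, which is \emph{negative} and of order $x_{i}^{-q_{i}-1}$ as $x_{i}\to0$. When $\eta_{i}=\sigma_{i}$ this choice fails (the bracket becomes positive), so I would take the critical function $V(x)=-\log x_{i}$, for which the drift/diffusion part equals $(\sigma_{i}-\eta_{i})/x_{i}$ minus lower-order terms and the dangerous $1/x_{i}$ singularity cancels \emph{exactly}. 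In both cases the immigration integral and the cross-branching integrals ($j\neq i$) are nonpositive, because $z_{i}\ge0$ increases $x_{i}$ and hence decreases $V$; the linear and interaction drift terms are absorbed into $C_{m}V$ on the set $\{x_{i}>0,\ |x|\le m\}$ using $b_{ij}\ge0$ ($j\neq i$), $c_{ii}<0$ and $|x|\le m$, the localization in $|x|$ being legitimate since \eqref{ndimCBIC} is non-explosive by Theorem~\ref{pathwise unique strong solution}.

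The hard part will be controlling the compensated \emph{self-branching} term $x_{i}\int_{\mathbb{R}_{+}^{d}}\big(V(x+z)-V(x)-z_{i}\partial_{i}V\big)\mu_{i}(dz)$, which is nonnegative (by convexity of $V$ in $x_{i}$) and a priori competes with the singular drift. For $V=x_{i}^{-q_{i}}$ I would split the integral at $z_{i}=x_{i}$: on $\{z_{i}\le x_{i}\}$ a second-order Taylor bound gives at most $\tfrac12 q_{i}(q_{i}+1)x_{i}^{-q_{i}-1}\int_{z_{i}\le x_{i}}z_{i}^{2}\mu_{i}(dz)=o(x_{i}^{-q_{i}-1})$, while on $\{z_{i}>x_{i}\}$ the estimate $V(x+z)-V(x)-z_{i}\partial_{i}V\le -z_{i}\partial_{i}V=q_{i}z_{i}x_{i}^{-q_{i}-1}$ leaves $q_{i}x_{i}^{-q_{i}-1}\big(x_{i}\int_{z_{i}>x_{i}}z_{i}\mu_{i}\big)$, and the elementary lemma $x_{i}\int_{x_{i}<z_{i}\le1}z_{i}\mu_{i}(dz)\to0$ as $x_{i}\to0$ (obtained from $\int_{z_{i}\le1}z_{i}^{2}\mu_{i}<\infty$ by splitting at an auxiliary level $\delta$) shows this too is $o(x_{i}^{-q_{i}-1})$. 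Thus the whole self-branching term is dominated by the negative leading drift, yielding $\mathcal{A}V\le C_{m}V$ with no extra integrability needed. In the critical case $V=-\log x_{i}$ the same term equals $x_{i}\int\big(z_{i}/x_{i}-\log(1+z_{i}/x_{i})\big)\mu_{i}(dz)$; here there is no singular drift to spend against it, so I would invoke the \emph{extra} hypothesis $\int_{|z|\le1}z_{i}\mu_{i}(dz)<\infty$ to bound it by $\int_{z_{i}\le1}z_{i}\mu_{i}+\int_{z_{i}>1}z_{i}\mu_{i}<\infty$, a constant. This pinpoints why the stronger integrability is required precisely on the critical line $\eta_{i}=\sigma_{i}$.

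Finally I would run the standard localization argument underlying the cited criterion. With $\theta^{i}_{\varepsilon}=\inf\{t:X_{i}(t)\le\varepsilon\}$ and $\rho_{m}=\inf\{t:|X(t)|\ge m\}$, Itô's formula together with $\mathcal{A}V\le C_{m}V$ makes $e^{-C_{m}t}V\big(X_{t\wedge\theta^{i}_{\varepsilon}\wedge\rho_{m}}\big)$ a supermartingale, so $\mathbb{E}\big[V(X_{t\wedge\theta^{i}_{\varepsilon}\wedge\rho_{m}})\big]\le e^{C_{m}t}V(x)$. Since on $\{\theta^{i}_{\varepsilon}<t\wedge\rho_{m}\}$ one has $V(X_{\theta^{i}_{\varepsilon}})\ge\varepsilon^{-q_{i}}$ (respectively $-\log\varepsilon$), letting $\varepsilon\to0$ forces $\mathbb{P}[\theta^{i}_{0}\le t\wedge\rho_{m}]=0$; then $m\to\infty$ (using $\rho_{m}\nearrow\infty$ a.s.) and $t\to\infty$ give \eqref{nonattainment} for each $i$.
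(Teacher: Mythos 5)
Your proposal is correct and sits inside the same framework as the paper's proof --- both verify the Foster--Lyapunov criterion of \citep[Proposition 2.1]{MR4419569} and then run the standard supermartingale localization --- but your treatment of the supercritical case $\eta_{i}>\sigma_{i}$ is genuinely different in its key estimate. The paper uses one global function $f(x)=1+\sum_{i=1}^{d}(x_{i}-\ln x_{i})$ for both regimes and handles the compensated self-branching term with a \emph{fixed} threshold: it first picks $\epsilon$ so small that $\eta_{i}\ge\sigma_{i}+\frac{1}{2}\int_{|z|\le\epsilon}z_{i}^{2}\mu_{i}(dz)$ (condition \eqref{immigration condition}), then uses $|t-\ln(1+t)|\le t^{2}/2$ to bound the small-jump part by $\frac{1}{2x_{i}}\int_{|z|\le\epsilon}z_{i}^{2}\mu_{i}(dz)$, which is absorbed into the negative singular drift $(\sigma_{i}-\eta_{i})/x_{i}$, while large jumps cost only the constant $\int_{|z|>\epsilon}z_{i}\mu_{i}(dz)$. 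You instead work coordinate by coordinate with the power function $x_{i}^{-q_{i}}$, $0<q_{i}<\eta_{i}/\sigma_{i}-1$, and split the jump integral at the \emph{moving} level $z_{i}=x_{i}$, proving the compensator is $o(x_{i}^{-q_{i}-1})$ via your little-o lemma $x_{i}\int_{x_{i}<z_{i}\le1}z_{i}\mu_{i}(dz)\to0$; the strict gap thus enters through the exponent rather than through the choice of $\epsilon$. Both routes use only the standing assumption $\int(z_{i}\wedge z_{i}^{2})\mu_{i}(dz)<\infty$, so neither is more general; the paper's computation is shorter because one function serves all coordinates and both regimes at once, while yours makes the mechanism (jump term asymptotically negligible against the singular drift) more transparent and isolates exactly why the extra hypothesis $\int_{|z|\le1}z_{i}\mu_{i}(dz)<\infty$ is needed only on the critical line $\eta_{i}=\sigma_{i}$, where your argument coincides with the paper's. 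One small patch: unlike $x_{i}^{-q_{i}}$, the function $-\log x_{i}$ is not nonnegative, so the inequality $\mathcal{A}V\le C_{m}V$ and the supermartingale step should be run with $V$ replaced by, say, $1+x_{i}-\log x_{i}$ (which is exactly why the paper adds the linear term), or with the constant $1+\log m$ added on the localizing set $\{|x|\le m\}$; this is cosmetic and does not affect your conclusion.
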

\begin{proof}
Our proof is essentially based on the $d$-dimensional
analogue of \citep[Proposition 2.1]{MR4419569}. We will find a function
$0\le f\in C^{2}\big((0,\infty)^{d}\big)$ satisfying : 1) $\lim_{x_{1}\wedge...\wedge x_{d}\to0^{+}}f(x)=+\infty$;
and 2) $\mathcal{L}f(x)\leq d_{m}f(x)$ for all $x\in(0,m)^{d}$ and
large $m\ge1$, where $\mathcal{L}$ is the generator of $X(t)$ and
$d_{m}>0$ are constants. Then we repeat the arguments in the proof
of \citep[Proposition 2.1]{MR4419569} to get (\ref{nonattainment}).

Let $f(x):=1+\sum_{i=1}^{d}(x_{i}-\ln x_{i}),x\in(0,\infty)^{d}$.
Then for $x\in(0,\infty)^{d}$, $\mathcal{L}f(x)$ is well-defined
and $\mathcal{L}f(x)=\mathcal{L}_{1}f(x)+\mathcal{L}_{2}f(x)$, where
\begin{align*}
 & \mathcal{L}_{1}f(x)=\sum_{i=1}^{d}(\eta_{i}+\frac{\sigma_{i}-\eta_{i}}{x_{i}})+\sum_{i,j=1}^{d}b_{ij}(x_{j}-\frac{x_{j}}{x_{i}})+\sum_{i,j=1}^{d}c_{ij}x_{j}(x_{i}-1)\\
 & \mathcal{L}_{2}f(x)=\int_{\mathbb{R}_{+}^{d}}\sum_{i=1}^{d}\bigg(z_{i}+\ln\frac{x_{i}}{x_{i}+z_{i}}\bigg)\nu(dz)+\sum_{i=1}^{d}x_{i}\int_{\mathbb{R}_{+}^{d}}\left(\sum_{j=1}^{d}\bigg(z_{j}+\ln\frac{x_{j}}{x_{j}+z_{j}}\bigg)-z_{i}+\frac{z_{i}}{x_{i}}\right)\mu_{i}(dz).
\end{align*}
Obviously $f$ satisfies (1). We next show that $f$ also satisfies
(2). 

Suppose $\eta_{i}>\sigma_{i}$. We can then find a small constant
$\epsilon\in(0,1)$ such that
\begin{align}
\eta_{i}\geq\sigma_{i}+\frac{1}{2}\int_{|z|\leq\epsilon}z_{i}^{2}\mu_{i}(dz)\label{immigration condition}
\end{align}
for each $i\in\{1,...,d\}$. For $x\in(0,m)^{d}$, we have
\[
\mathcal{L}_{1}f(x)\leq\sum_{i=1}^{d}\bigg(\eta_{i}+\frac{\sigma_{i}-\eta_{i}}{x_{i}}+|b_{ii}|+\sum_{j=1}^{d}\bigg(|b_{ij}|+(1+m)|c_{ij}|\bigg)x_{j}\bigg).
\]
Note that $|t-\ln(1+t)|\le t^{2}/2$ for all $t\ge0.$ Then we obtain
\[
x_{i}\int_{|z|\leq\epsilon}\bigg(\frac{z_{i}}{x_{i}}+\ln\frac{x_{i}}{x_{i}+z_{i}}\bigg)\mu_{i}(dz)=x_{i}\int_{|z|\leq\epsilon}\bigg(\frac{z_{i}}{x_{i}}-\ln\left(1+\frac{z_{i}}{x_{i}}\right)\bigg)\mu_{i}(dz)\leq\frac{1}{2x_{i}}\int_{|z|\leq\epsilon}z_{i}^{2}\mu_{i}(dz),
\]
and
\[
x_{i}\int_{|z|>\epsilon}\big(\frac{z_{i}}{x_{i}}+\ln\frac{x_{i}}{x_{i}+z_{i}}\big)\mu_{i}(dz)\leq\int_{|z|>\epsilon}z_{i}\mu_{i}(dz).
\]
So $\mathcal{L}f(x)\leq C_{m}\big(1+\sum_{i=1}^{d}x_{i}\big)$ for
some large constant $C_{m}$ by inequality (\ref{immigration condition}).
Noticing that $e(e-1)^{-1}(x-\ln x)\geq x$, $x>0$,
it is easy to find another constant $d_{m}>0$ such that $\mathcal{L}f(x)\leq d_{m}f(x)$,
$x\in(0,m)^{d}$. Hence condition (2) is true. 

For the case $\eta_{i}=\sigma_{i}$ and $\int_{|z|\leq1}z_{i}\mu_{i}(dz)<\infty$,
we have
\[
\mathcal{L}_{2}f(x)\leq\int_{\mathbb{R}_{+}^{d}}\bigg(\sum_{i=1}^{d}z_{i}\bigg)\nu(dz)+\sum_{i=1}^{d}\bigg(x_{i}\int_{\mathbb{R}_{+}^{d}}\bigg(\sum_{j\neq i}^{d}z_{j}\bigg)\mu_{i}(dz)+\int_{\mathbb{R}_{+}^{d}}z_{i}\mu_{i}(dz)\bigg).
\]
So condition (2) can be verified in a similar way
as in the first case. 

We now prove (\ref{nonattainment}). Define $\tau_{n}:=\inf\{t\geq0:\min_{1\leq i\leq d}X_{i}(t)<\frac{1}{n}\}$,
$\sigma_{m}:=\inf\{t\ge0:\max_{1\leq i\leq d}X_{i}(t)>m\}$ and $\gamma_{m,n}:=\tau_{n}\wedge\sigma_{m}$.
By It$\hat{\text{o}}$'s formula,
\[
f(X(t\wedge\gamma_{m,n}))=f(x)+\int_{0}^{t\wedge\gamma_{m,n}}\mathcal{L}f(X(s))ds+M(t\wedge\gamma_{m,n}).
\]
Here $\left\{ M(t\wedge\gamma_{m,n})\right\} _{t\ge0}$ is the sum
of some stochastic integrals and is thus a local martingale, that
is, there are stopping times $(\gamma_{k})_{k\geq1}$ such that $\gamma_{k}\to+\infty$
a.s. and $M(t\wedge\gamma_{m,n}\wedge\gamma_{k})$ is a martingale.
By condition (2) we have
\[
\mathbb{E}[f(X(t\wedge\gamma_{m,n}\wedge\gamma_{k}))]=f(x)+\mathbb{E}[\int_{0}^{t\wedge\gamma_{m,n}\wedge\gamma_{k}}\mathcal{L}f(X(s))ds]\leq f(x)+d_{m}\mathbb{E}[\int_{0}^{t}f(X(s\wedge\gamma_{m,n}\wedge\gamma_{k}))ds].
\]
By Gronwall's inequality and letting $k\to+\infty$, we obtain $\mathbb{E}[f(X(t\wedge\gamma_{m,n}))]\leq f(x)e^{d_{m}t}$,  $t\geq0.$
Using condition (1) and letting $n\to\infty$, we get $\mathbb{P}(\tau_{0}>t\wedge\sigma_{m})=1$
for each $m\geq1$ and $t>0$, where $\tau_{0}:=\inf\{t>0:X(t)\in\partial\mathbb{R}_{+}^{d}\}$.
Since $X(t)$ is non-explosive, (\ref{nonattainment}) follows by letting $m\to\infty$ and then $t\to\infty$.
\end{proof}

\subsection{Boundary attainment}

In this section, we provide some sufficient conditions for a CIMBI
process to hit the boundary $\partial\mathbb{R}_{+}^{d}$.

We first consider the diffusion case. The proof of the following theorem
is based on a comparison argument due to \citep{cattiaux2010competitive}.
For this comparison to work, we need to assume that $b_{ij}=0$, $i\neq j$.
\begin{Theorem}\label{attainment behavior} Consider the following
$d$-dimensional CIMBI process: for $i=1,\ldots,d$,
\begin{equation}
X_{i}(t)=x_{i}+\int_{0}^{t}\bigg(\eta_{i}+b_{ii}X_{i}(s)+\gamma_{i}\left(X(s)\right)\bigg)ds+\int_{0}^{t}\sqrt{2\sigma_{i}X_{i}(s)}dW_{i}(s),\label{cimbi diffusion}
\end{equation}
where $x_{i}>0$ and $\sigma_{i}>0$. Assume that $\eta_{i}<\sigma_{i}/2$,
$i=1,...,d$. Define $\tilde{c}_{ii}:=c_{ii}$ and for
$i\neq j$, $\tilde{c}_{ij}:=\max(0,c_{ij})$. Then $\mathbb{P}[X(t)\in\partial\mathbb{R}_{+}^{d}\enspace\mbox{for some }\enspace t>0]=1$,
provided that one of the following two conditions holds:
\begin{enumerate}[{(1)}]
\item for each $i=1,\ldots,d$, $b_{ii}<0$ and $\sum_{i,j=1}^{d}\tilde{c}_{ij}\sigma_{j}y_{i}y_{j}\le0$,
$y\in\mathbb{R}_{+}^{d}$;
\item the quadratic form $\sum_{i,j=1}^{d}\tilde{c}_{ij}\sigma_{j}y_{i}y_{j}$,
$y\in\mathbb{R}^{d}$ is negative definite.
\end{enumerate}
\end{Theorem}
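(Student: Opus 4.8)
The plan is to follow the Lamperti-type change of variables of \cite{cattiaux2010competitive}, turning the degenerate branching noise into additive noise, and then to split the work into two independent tasks: controlling the process at infinity, and forcing a single coordinate down to $0$. Introduce $Y_i(t):=\sqrt{X_i(t)/\sigma_i}$. Since $\sigma_i>0$ and $x_i>0$, each $Y_i$ is well defined and strictly positive up to $\tau_{\partial}:=\inf\{t>0:X(t)\in\partial\mathbb{R}_+^d\}$, and $X$ reaches $\partial\mathbb{R}_+^d$ exactly when some $Y_i$ reaches $0$. Applying It\^o's formula to \eqref{cimbi diffusion} gives, for $t<\tau_{\partial}$,
\[
dY_i=\Big(\frac{\eta_i-\sigma_i/2}{2\sigma_i Y_i}+\tfrac12 b_{ii}Y_i+\tfrac12 Y_i\sum_{j=1}^{d}c_{ij}\sigma_j Y_j^2\Big)\,dt+\frac{1}{\sqrt{2}}\,dW_i .
\]
The noise is now additive, and the standing assumption $\eta_i\le\sigma_i/2$ makes the coefficient of the singular term $1/Y_i$ nonpositive; concretely, near the face $\{Y_i=0\}$ the coordinate $Y_i$ behaves like a Bessel process of dimension $2\eta_i/\sigma_i\le 1<2$, which reaches $0$ almost surely. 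This is the mechanism that ultimately produces boundary attainment; conditions (1) and (2) will only be needed to keep the process from escaping to infinity.

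To exploit the hypotheses I would next study the functional $L:=\sum_{i}X_i/\sigma_i=\sum_i Y_i^2$. A second application of It\^o's formula yields
\[
dL=\Big(A+\sum_{i=1}^{d}b_{ii}Y_i^2+\sum_{i,j=1}^{d}c_{ij}\sigma_j Y_i^2Y_j^2\Big)\,dt+\sqrt{2}\sum_{i=1}^{d}Y_i\,dW_i ,
\]
where $A=\sum_i\eta_i/\sigma_i$ and the martingale part has quadratic variation $2L\,dt$. The interaction contribution is \emph{exactly} the quadratic form $\sum_{i,j}c_{ij}\sigma_j y_iy_j$ evaluated at $y=(Y_1^2,\dots,Y_d^2)\in\mathbb{R}_+^d$. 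Under condition (1) this form is $\le 0$ and $b_{ii}<0$, so $dL\le(A-\beta L)\,dt+dM$ with $\beta=-\max_i b_{ii}>0$; under condition (2) negative definiteness gives $dL\le(A+CL-\lambda' L^2)\,dt+dM$. In either case $L$ is dominated by a one-dimensional mean-reverting (CIR-type) diffusion with quadratic variation $2L\,dt$, which is non-explosive and recurrent. Hence $X$ cannot escape to infinity and returns to a fixed compact set $K\subset(0,\infty)^d$ infinitely often.

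For the hitting itself I would establish a uniform local lower bound: there exist $T>0$ and $p_0>0$ with $\inf_{x\in K}\mathbb{P}_x(\tau_{\partial}\le T)\ge p_0$. Fixing the face $\{x_1=0\}$, nondegeneracy of the diffusion in the interior (all $\sigma_i>0$) and the support theorem give, for any $x\in K$, a positive probability of reaching by time $T/2$ a neighbourhood $\mathcal{N}=\{Y_1\le\delta,\ \underline{m}\le Y_j\le\overline{m}\ (j\ge 2)\}$. Inside $\mathcal{N}$ the interaction drift $\tfrac12 Y_1\sum_j c_{1j}\sigma_j Y_j^2$ is bounded by $C\delta$ thanks to its prefactor $Y_1\le\delta$ and the boundedness of the $Y_j$, and the term $\tfrac12 b_{11}Y_1$ is likewise $O(\delta)$, so the drift of $Y_1$ is at most $\tfrac{\eta_1-\sigma_1/2}{2\sigma_1 Y_1}+C\delta$. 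Comparing $Y_1$ with a Bessel process of dimension $<2$ perturbed by a bounded drift shows it reaches $0$ before leaving $\mathcal{N}$ with probability bounded below, which yields $p_0$. Finally, on $\{\tau_{\partial}=\infty\}$ the process stays in the interior and, by the recurrence of Step~2, returns to $K$ along stopping times $t_n\uparrow\infty$; the strong Markov property together with the uniform bound $p_0$ gives, via a conditional Borel--Cantelli argument, $\mathbb{P}(\tau_{\partial}=\infty)\le\prod_n(1-p_0)=0$, i.e. $\mathbb{P}(\tau_{\partial}<\infty)=1$.

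The \emph{main obstacle} is the coupling through the non-sign-definite interaction drift $\tfrac12 Y_i\sum_j c_{ij}\sigma_j Y_j^2$: under cooperation it can push $Y_i$ upward, so the clean one-dimensional comparison $Y_i\le{}$Bessel used in the single-type case fails globally. The resolution is to invoke that comparison only locally near a face, where the $Y_i$-prefactor renders the interaction negligible, while conditions (1) and (2)---acting through $L$---prevent the remaining coordinates from running off to infinity and supply the recurrence needed to make infinitely many attempts at absorption. Turning the uniform positive local probability into an almost-sure statement (the conditional Borel--Cantelli step and the support-theorem lower bound) is the most delicate point; for purely competitive interactions the local comparison becomes global and the whole argument simplifies, which is precisely the relaxation announced for the competitive regime.
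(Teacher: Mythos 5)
Your first step---the square-root change of variables producing additive noise, with the singular drift $(\eta_i-\sigma_i/2)/(2\sigma_i Y_i)\le 0$ under $\eta_i\le\sigma_i/2$ (Bessel dimension $2\eta_i/\sigma_i\le 1$)---is exactly the transformation the paper uses, and your reading of conditions (1)--(2) as a control at infinity is also correct. The genuine gap is in the second half. First, your recurrence step does not give what you claim: the drift bound on $L=\sum_i Y_i^2$ shows that $X$ returns infinitely often to $K=\{x\in\mathbb{R}_+^d:\sum_i x_i/\sigma_i\le R\}$, which is compact in $\mathbb{R}_+^d$ but \emph{meets the boundary}; it is not a compact subset of $(0,\infty)^d$, and on $\{\tau_\partial=\infty\}$ you cannot bound the visited points away from $\partial\mathbb{R}_+^d$---accumulation at the boundary without touching it is precisely the scenario you must rule out. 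Second, and this is the crux, the uniform bound $\inf_{x\in K}\mathbb{P}_x(\tau_\partial\le T)\ge p_0$ is asserted rather than proved. Even pointwise positivity needs care here: the $Y$-dynamics has drift that is singular at the boundary and grows cubically, and the $X$-dynamics is degenerate at the boundary, so the Stroock--Varadhan support theorem does not apply off the shelf. Worse, since by the first point $K$ necessarily contains points arbitrarily close to $\partial\mathbb{R}_+^d$, uniformity does not follow from pointwise positivity (hitting probabilities of a closed set need not be lower semicontinuous), and your two-stage scheme (steer into $\mathcal{N}$, then compare with a drifted Brownian motion) would require a separate treatment of near-boundary starting points and a genuinely uniform steering estimate. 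Without a uniform $p_0$, the conditional Borel--Cantelli product collapses, so the proof is incomplete at its decisive step.

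The paper avoids all of this by a different device after the same change of variables: using $\eta_i\le\sigma_i/2$ it \emph{drops} the nonpositive singular terms and compares $Z_i(t)\le U_i(t)$ for $t<\tau_\partial$, where $U$ solves $dU_i=dW_i+\big(b_{ii}U_i/2+\sum_j c_{ij}\sigma_j U_i U_j^2/4\big)dt$ on \emph{all of} $\mathbb{R}^d$, a process with locally Lipschitz, nonsingular coefficients. Conditions (1)--(2) furnish a Lyapunov function for $U$; strong Feller (T-process) plus irreducibility (via Dirichlet heat-kernel positivity) then give Harris recurrence of $U$ by the Meyn--Tweedie criterion, so $U$ almost surely enters regions where some coordinate is negative. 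Since $0\le Z_i\le U_i$ up to $\tau_\partial$, this forces $\tau_\partial<\infty$ almost surely. Harris recurrence thus packages the ``infinitely many attempts'' internally and requires neither a uniform hitting estimate nor a support theorem for the singular dynamics; if you want to salvage your route, you should either replace your recurrence-plus-local-hitting scheme by such a comparison, or actually prove the uniform lower bound, including for starting points adjacent to the boundary.
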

\begin{proof}
Set $Z_{i}(t)=2\sqrt{X_{i}(t)/2\sigma_{i}}$, by It$\hat{\text{o}}$'s
formula, we have
\begin{align*}
dZ_{i}(t)=dW_{i}(t)+\bigg(\frac{\eta_{i}}{\sigma_{i}Z_{i}(t)}-\frac{1}{2Z_{i}(t)}+\frac{b_{ii}Z_{i}(t)}{2}+\sum_{j=1}^{d}\frac{c_{ij}\sigma_{j}Z_{i}(t)(Z_{j}(t))^{2}}{4}\bigg)dt,
\end{align*}
$Z_{i}(0)=2\sqrt{x_{i}/2\sigma_{i}}$. Define $\sigma:=\inf\{t>0:X(t)\in\partial\mathbb{R}_{+}^{d}\}$.

Assume that condition (1) or (2) is true. Then $Z_{i}(t)\leq\tilde{U}_{i}(t)$
for $t<\sigma$, where
\[
d\tilde{U}_{i}(t)=dW_{i}(t)+\bigg(\frac{b_{ii}\tilde{U}_{i}(t)}{2}+\sum_{j=1}^{d}\frac{\tilde{c}_{ij}\sigma_{j}\tilde{U}_{i}(t)(\tilde{U}_{j}(t))^{2}}{4}\bigg)dt,\quad\tilde{U}_{i}(0)=Z_{i}(0).
\]
In fact, define $V_{i}(s):=Z_{i}(s)-\tilde{U}_{i}(s)$, then $V_{i}(\cdot)$
is $C^{1}$, $V_{i}(0)=0$ and
\begin{align*}
\frac{d}{ds}(V_{i}(s))\big|_{s=0}=\frac{\eta_{i}}{\sigma_{i}x_{i}}-\frac{1}{2x_{i}}+\sum_{j\neq i}\big(c_{ij}-\tilde{c}_{ij}\big)x_{i}x_{j}<0.
\end{align*}
So $V_{i}(s)<0$ for very small $s\in(0,t]$. Suppose that for some
$s\in(0,t]$, $V_{j}(s)=0$ for some $j$. Let $u:=\inf\left\{ s\in(0,t]:V_{j}(s)=0\mbox{ for some }j\right\} $.
Then $V_{i_{0}}(u)=0$ for some $i_{0}\in\{1,...,d\}$. So $Z_{i_{0}}(u)=\tilde{U}_{i_{0}}(u)$
and $Z_{j}(u)\leq\tilde{U}_{j}(u)$ for $j\neq i_{0}$. Moreover,
\[
\frac{d}{ds}(V_{i_{0}}(s))\big|_{s=u}=\frac{\eta_{i}}{\sigma_{i}Z_{i_{0}}(u)}-\frac{1}{2Z_{i_{0}}(u)}+\frac{1}{4}\sum_{j\neq i_{0}}\bigg(c_{i_{0}j}\sigma_{j}Z_{i_{0}}(u)(Z_{j}(u))^{2}-\tilde{c}_{i_{0}j}\sigma_{j}\tilde{U}_{i_{0}}(u)(\tilde{U}_{j}(u))^{2}\bigg)\leq\frac{\eta_{i}}{\sigma_{i}Z_{i_{0}}(u)}-\frac{1}{2Z_{i_{0}}(u)}<0,
\]
contradicting the fact that $V_{i_{0}}(s)<0,\ 0<s<u$ and $V_{i_{0}}(u)=0$.
Hence $Z(s)\leq\tilde{U}(s),\ 0\le s\le t$. As in the proof of Theorem
\ref{pathwise unique strong solution}, we can find constants $C_{1},C_{2}>0$
such that $\mathbb{E}[1+|\tilde{U}(t)|^{2}]\le C_{1}\exp(C_{2}t)$,
$t\ge0$, so $\tilde{U}(t)$ is
non-explosive. If we can prove that $\tilde{U}(t)$
hits the boundary $\partial\mathbb{R}_{+}^{d}$ with probability one,
then $Z_{i}(t)$ and $X_{i}(t)$ will also hit $\partial\mathbb{R}_{+}^{d}$
almost surely. To obtain this, it suffices to prove that $\tilde{U}(t)$
is Harris recurrent in $\mathbb{R}^{d}$ (see \citet[Section 2.2]{1993Stability}
for a definition), which implies $\tilde{U}_{i}(t)<0$
for some $t=t(\omega)>0$ and thus $\tilde{U}_{i}(s)=0$ for some
$s<t$ by continuity of paths. According to \citep[Theorem 3.3]{MR1234295},
irreducibility, T-process property, and existence of a Lyapunov function
imply the Harris recurrence. Now we show that these three properties
are met for the process $\tilde{U}(t)$.

``Lyapunov function'': It is not hard to verify that $f(x)=1+\sum_{i=1}^{d}x_{i}^{2}$
satisfies the condition (CD1) in \citep[Theorem 3.3]{MR1234295},
namely, $\mathcal{A}f(x)=d+\sum_{i=1}^{d}b_{ii}x_{i}^{2}+\sum_{i,j=1}^{d}\tilde{c} _{ij}\sigma_{j}x_{i}^{2}x_{j}^{2}/2\leq k\mathbbm{1}_{\{x:\Vert x\Vert\leq r\}}$,
$x\in\mathbb{R}^{d}$, where $\mathcal{A}$ is the generator of the
process $\tilde{U}(t)$ and $k$
and $r$ are positive constants.

`` T-process'': Note that $\tilde{U}(t)$ has the identity matrix as its
diffusion matrix and its
drift is continuous and thus bounded on compact sets. In addition, $\tilde{U}(t)$ is
non-explosive. So {[}3, Lemma 2.5{]} applies and $\tilde{U}(t)$ has
the strong Feller property. Its transition function is thus its own
continuous component (see \citep[page 495]{1993Stability} for a definition).
It follows from the definition on page 496 of \citep{1993Stability}
that $\tilde{U}(t)$ is a T-process.

``Irreducibility'': Let $\lambda$ be the Lebesgue measure on $\mathbb{R}^{d}$.
Suppose $\tilde{U}(0)=x$ and
$A\subset\mathbb{R}^{d}$ with $\lambda(A)>0$. Let $D_{R}:=\{x\in\mathbb{R}^{d}\,\big|\,|x|<R\}$,
where $R>0$ is large enough such that $x\in D_{R}$ and $\lambda(A\cap D_{R})>0$.
According to \citep[Theorem 4.2]{MR2247841} (see also \citep[Theorem 1.1]{MR2853532}),
the process $\tilde{U}$ started at $x$ and killed upon
exiting from $D_{R}$, denoted by $\tilde{U}_{x}^{R}$,
has a positive density $p^{R}(t,x,y)$ with respect to the Lebesgue
measure. Namely, defining $\tau:=\inf\{t>0:\tilde{U}(t)\notin D_{R}\}$,
it holds that $\mathbb{P}[\tilde{U}(t)\in E,t<\tau]=\int_{E}p^{R}(t,x,y)dy$
for $E\in\mathcal{B}(D_{R})$, where
\[
p^{R}(t,x,y)>0,\quad(t,x,y)\in(0,\infty)\times D_{R}\times D_{R}.
\]
Therefore, for any $t>0$, $\mathbb{P}[\tilde{U}(t)\in A]\ge\mathbb{P}[\tilde{U}(t)\in A\cap D_{R},t<\tau]=\int_{A\cap D_{R}}p^{R}(t,x,y)dy>0$.
Hence, $\tilde{U}(t)$ is irreducible
with respect to the Lebesgue measure.

So $\tilde{U}(t)$ is Harris
recurrent in $\mathbb{R}^{d}$ under condition (1) or (2). The assertion
is proved.
\end{proof}
It's worth noting that when $d=2$, our Theorem \ref{attainment behavior}
improves \citep[Theorem 2.2]{cattiaux2010competitive} in two ways:
1). we allow interactions that are of a mixed type; 2). even for the
cooperative interaction case, our conditions are significantly weaker
than their ``balance condition'' (see \citep[Equation (2.5)]{cattiaux2010competitive}).

\begin{Corollary} Consider the $d$-dimensional CIMBI process $X(t)$
given by \eqref{cimbi diffusion} and assume the same assumptions
as in Theorem \ref{attainment behavior} and additionally $\eta_{i}=0$
for each $i\in\{1,...,d\}$. Then $X(t)$ goes to extinction in finite
time with probability one. \end{Corollary}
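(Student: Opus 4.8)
The plan is to combine the boundary-attainment conclusion of Theorem \ref{attainment behavior} with the observation that, once $\eta\equiv 0$, each face $\{x_i=0\}$ of $\mathbb{R}_{+}^{d}$ becomes absorbing for the $i$-th coordinate, and then to remove extinct coordinates one at a time by induction on the dimension.

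First I would record the absorption property. With $\eta_i=0$ the drift of the $i$-th coordinate is $b_{ii}X_i+\gamma_i(X)=X_i\big(b_{ii}+\sum_{j}c_{ij}X_j\big)$ and the diffusion coefficient is $\sqrt{2\sigma_iX_i}$; both vanish on $\{x_i=0\}$. Hence if $X_i(t_0)=0$ for some $t_0$, then $X_i(t)=0$ for all $t\ge t_0$ almost surely. Consequently the set $S(t):=\{i:X_i(t)=0\}$ is nondecreasing in $t$, and the origin is absorbing for the whole process.

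Next I would run the dimensional induction. By Theorem \ref{attainment behavior} the process hits $\partial\mathbb{R}_{+}^{d}$ at an almost surely finite time $\tau_1$ (finiteness is exactly what the Harris-recurrence argument there delivers, since the dominating process $U$ a.s.\ reaches $\{\min_i y_i\le 0\}$ in finite time), so $S(\tau_1)\neq\emptyset$. By the strong Markov property I restart at $\tau_1$: for $t\ge\tau_1$ the coordinates in $S(\tau_1)$ stay at $0$, so the surviving coordinates $(X_i)_{i\notin S(\tau_1)}$ solve exactly the CIMBI diffusion \eqref{cimbi diffusion} in dimension $d-|S(\tau_1)|<d$, with the same $\sigma_i$ and $b_{ii}$, with interaction matrix the corresponding principal submatrix of $c$, with immigration still $0$, and started from the interior of the lower orthant (the surviving coordinates are strictly positive at $\tau_1$). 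The point to verify is that the hypotheses of Theorem \ref{attainment behavior} are inherited by this subsystem: $b_{ii}<0$ and $\eta_i=0\le\sigma_i/2$ are unchanged, and both the sign condition in~(1) and the negative definiteness in~(2) pass to principal submatrices, since setting $y_j=0$ for $j\in S(\tau_1)$ in the quadratic form $\sum_{i,j}c_{ij}\sigma_j y_iy_j$ preserves both its being $\le 0$ on the positive orthant and its negative definiteness; the same substitution shows the standing condition \eqref{interaction condition} is inherited, so Theorem \ref{pathwise unique strong solution} still guarantees existence and uniqueness for the subsystem. Applying Theorem \ref{attainment behavior} to it produces a further a.s.\ finite time $\tau_2>\tau_1$ at which $|S|$ strictly increases. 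Iterating yields $\tau_1<\tau_2<\cdots$ with $|S(\tau_m)|$ strictly increasing and each increment $\tau_{m+1}-\tau_m$ a.s.\ finite; after at most $d$ steps $S(\tau_m)=\{1,\dots,d\}$, i.e.\ $X(\tau_m)=0$. Setting $T:=\tau_m$ for this $m$, absorption at the origin gives $X(t)=0$ for all $t\ge T$, while $T<\infty$ a.s.\ as a finite sum of a.s.\ finite terms, which is the asserted extinction in finite time.

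The main obstacle is keeping the induction clean despite $S(\tau_1)$ being random and possibly containing several indices at once, and checking rigorously that on each interval $[\tau_m,\tau_{m+1})$ the surviving block is genuinely a lower-dimensional CIMBI diffusion to which the theorem applies—in particular the inheritance of \eqref{interaction condition} and of conditions (1)/(2) by the relevant principal submatrix, and the fact that the surviving coordinates are strictly positive at each restart time so the interior-start hypothesis is met. Once these inheritance facts are in hand, the reduction of the dimension is routine.
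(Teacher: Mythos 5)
Your proof is correct and follows essentially the same route as the paper's: boundary attainment from Theorem \ref{attainment behavior}, absorption of each coordinate at zero once $\eta\equiv 0$, and induction on the dimension at each successive hitting time. The only difference is minor: the paper closes the induction by reducing to a single-type competition model and citing Lambert's extinction theorem, whereas you re-apply Theorem \ref{attainment behavior} all the way down to dimension one (valid, since its conditions hold automatically there), and you spell out the inheritance of conditions (1)/(2) and of \eqref{interaction condition} by principal submatrices, which the paper leaves implicit.
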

\begin{proof}
Define $\sigma:=\inf\{t\ge0:X(t)\in\partial\mathbb{R}_{+}^{d}\}$.
We know from the previous theorem that $\sigma<\infty$ a.s. Since
$\eta_{i}=0$, $i=1,\ldots,d$, if $X(t)$ hits the boundary, then
there exists $i_{0}\in\{1,...,d\}$ such that $X_{i_{0}}$ goes to
extinction in finite time. Conditioning on $\left\{ X_{i_{0}}(\sigma)=0\right\} $,
the model, after $\sigma$, becomes a $(d-1)$-dimensional CIMBI process.
Then we can apply the same procedure to reduce it to a single-type
competition model that goes to extinction in finite time by \citet[Theorem 3.5]{MR2134113}.
\end{proof}
\begin{Theorem} \label{attainment behavior 2} Consider the following
$d$-dimensional CIMBI process: for $i=1,\ldots,d$,
\begin{equation}
\begin{aligned}X_{i}(t)= & x_{i}+\int_{0}^{t}\bigg(\eta_{i}+b_{ii}X_{i}(s)+\gamma_{i}\left(X(s)\right)\bigg)ds+\int_{0}^{t}\sqrt{2\sigma_{i}X_{i}(s)}dW_{i}(s)+\int_{0}^{t}\int_{\mathbb{R}_{+}^{d}}z_{i}N_{0}\left(ds,dz\right)\\
 & +\int_{0}^{t}\int_{\mathbb{R}_{+}}\int_{\mathbb{R}_{+}^{d}}z_{i}\mathbbm{1}_{\{u\leq X_{i}(s-)\}}\tilde{N}_{i}\left(ds,du,dz\right)+\sum_{j\neq i}\int_{0}^{t}\int_{\mathbb{R}_{+}}\int_{\mathbb{R}_{+}^{d}}z_{i}\mathbbm{1}_{\{u\leq X_{j}(s-)\}}N_{j}\left(ds,du,dz\right),
\end{aligned}
\label{ndimCBIC2}
\end{equation}
where $x_{i}>0$ and $\sigma_{i}>0$. Assume that $\nu,\mu_{1},\ldots,\mu_{d}$
are finite measures, $\eta_{i}<\sigma_{i}/2$, $i=1,...,d$. Define $\tilde{c}_{ii}:=c_{ii}$ and for
$i\neq j$, $\tilde{c}_{ij}:=\max(0,c_{ij})$. Then
$\mathbb{P}[X(t)\in\partial\mathbb{R}_{+}^{d}\enspace\mbox{for some }\enspace t>0]>0$,
provided that one the following two conditions is true:
\begin{enumerate}[{(1)}]
\item for each $i=1,\ldots,d$, $b_{ii}-\int_{\mathbb{R}_{+}^{d}}z_{i}\mu_{i}(dz)<0$
and $\sum_{i,j=1}^{d}\tilde{c}_{ij}\sigma_{j}y_{i}y_{j}\le0$, $y\in\mathbb{R}_{+}^{d}$;
\item the quadratic form $\sum_{i,j=1}^{d}\tilde{c}_{ij}\sigma_{j}y_{i}y_{j}$,
$y\in\mathbb{R}^{d}$ is negative definite.
\end{enumerate}
\end{Theorem}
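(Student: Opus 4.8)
The plan is to reduce to the diffusion result of Theorem \ref{attainment behavior} by freezing the jumps. The key observation is that strictly between the jump times of the Poisson random measures $N_0,N_1,\dots,N_d$, the process $X$ solves a pure diffusion SDE. Indeed, writing the own-type term via its compensator, the integrals against $N_0$ and against $N_j$ ($j\neq i$) contribute nothing between jumps, while the compensator of the $\tilde N_i$-integral leaves the deterministic drift $-X_i(s)\int_{\mathbb{R}_+^d}z_i\,\mu_i(dz)$. Hence between jumps each coordinate evolves as
\[dX_i=\bigl(\eta_i+\beta_{ii}X_i+\gamma_i(X)\bigr)\,dt+\sqrt{2\sigma_iX_i}\,dW_i,\qquad \beta_{ii}:=b_{ii}-\int_{\mathbb{R}_+^d}z_i\,\mu_i(dz).\]
This is precisely the SDE \eqref{cimbi diffusion} of Theorem \ref{attainment behavior} with $b_{ii}$ replaced by $\beta_{ii}$, and the two hypotheses of the present theorem are designed exactly so that Theorem \ref{attainment behavior} applies to it: condition (1) here is condition (1) there for the coefficient $\beta_{ii}$, while condition (2) is identical and does not involve the linear drift.

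Concretely, I would let $\tilde X$ be the strong solution of the displayed diffusion SDE driven by the \emph{same} Brownian motion $W$, with $\tilde X(0)=x$. By Theorem \ref{attainment behavior}, its boundary hitting time $\tilde\sigma:=\inf\{t>0:\tilde X(t)\in\partial\mathbb{R}_+^d\}$ is a.s.\ finite. Let $\tau_J$ denote the first jump time of the solution $X$ of \eqref{ndimCBIC2}. Before $\tau_J$ the integrals against $N_0$ and $N_j$ vanish, so $X$ solves the between-jumps SDE on $[0,\tau_J)$; by pathwise uniqueness (Theorem \ref{pathwise unique strong solution}) this forces $X=\tilde X$ on $[0,\tau_J)$. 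In particular, on the event $\{\tau_J>\tilde\sigma\}$ the process $X$ agrees with $\tilde X$ up to $\tilde\sigma$ and therefore hits $\partial\mathbb{R}_+^d$ at the positive time $\tilde\sigma$. It thus suffices to show $\mathbb{P}[\tau_J>\tilde\sigma]>0$.

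To estimate this I would exploit the finite activity of the measures: the total state-dependent jump intensity of $X$ at a point $x$ equals $\Lambda(x)=\nu(\mathbb{R}_+^d)+\sum_{j=1}^d x_j\,\mu_j(\mathbb{R}_+^d)<\infty$, an affine and hence locally bounded function of $x$. Since $W$ is independent of the Poisson random measures and $\tilde X$ is a functional of $W$ alone, I would condition on $W$: given the (then deterministic) path $\tilde X$, the effective atoms of $N_0,N_1,\dots,N_d$ in $[0,t]$ form a Poisson count with mean $\int_0^t\Lambda(\tilde X(s))\,ds$, whence $\mathbb{P}[\tau_J>t\mid W]=\exp\bigl(-\int_0^t\Lambda(\tilde X(s))\,ds\bigr)$. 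Because $\tilde\sigma<\infty$ a.s.\ and $\tilde X$ has continuous paths, $\tilde X$ is bounded on the compact interval $[0,\tilde\sigma]$, so $\int_0^{\tilde\sigma}\Lambda(\tilde X(s))\,ds<\infty$ a.s.; consequently $\exp\bigl(-\int_0^{\tilde\sigma}\Lambda(\tilde X(s))\,ds\bigr)>0$ a.s. Taking expectations yields $\mathbb{P}[\tau_J>\tilde\sigma]=\mathbb{E}\bigl[\exp\bigl(-\int_0^{\tilde\sigma}\Lambda(\tilde X(s))\,ds\bigr)\bigr]>0$, and the theorem follows.

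I expect the main obstacle to be the rigorous coupling together with the conditional Poisson computation: one must justify carefully that up to the first jump $X$ genuinely coincides with the auxiliary diffusion $\tilde X$, and that, conditionally on $W$, the first effective atom time has the stated exponential-type survival function with hazard $\Lambda(\tilde X(s))$. The remaining technical points are the correct identification of the compensator drift $-X_i\int z_i\,\mu_i(dz)$ and the a.s.\ finiteness of the random integral $\int_0^{\tilde\sigma}\Lambda(\tilde X(s))\,ds$, both of which follow from $\tilde\sigma<\infty$ and the continuity of the diffusion paths.
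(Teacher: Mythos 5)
Your proposal is correct and is essentially the paper's own argument: both reduce to the auxiliary diffusion whose drift absorbs the jump compensator, $\tilde b_{ii}=b_{ii}-\int_{\mathbb{R}_+^d} z_i\,\mu_i(dz)$, invoke Theorem \ref{attainment behavior} to get an a.s.\ finite boundary-hitting time for that diffusion, and then use the finiteness of $\nu,\mu_1,\ldots,\mu_d$ together with the independence of $W$ from the Poisson random measures to produce a positive-probability event on which no effective jump occurs before that hitting time, the coupling $X=\tilde X$ up to the first effective jump being secured by pathwise uniqueness. The only divergence is bookkeeping in the final step: the paper fixes a deterministic horizon $T$ and truncation level $M$ and multiplies the probabilities of the independent events $\{\sigma\le T<\tau_M\}$ (a $W$-event) and the jump-free event on $(0,T]\times[0,M]\times\mathbb{R}_+^d$, whereas you condition on $W$ and compute the survival probability $\exp\bigl(-\int_0^{\tilde\sigma}\Lambda(\tilde X(s))\,ds\bigr)$ with state-dependent hazard---a mildly slicker variant of the same independence argument, whose one technical subtlety (identifying the first effective atom of $X$ with that of $\tilde X$) you correctly flag and which is indeed fillable.
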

\begin{proof}
We first consider the following equation: for $i=1,\ldots,d$,

\begin{align}
Y_{i}(t)=x_{i}+\int_{0}^{t}\bigg(\eta_{i}+\tilde{b}_{ii}Y_{i}(s)+\gamma_{i}(Y(s))\bigg)ds+\int_{0}^{t}\sqrt{2\sigma_{i}Y_{i}(s)}dW_{i}(s),\label{nonattainment of CBII diffusion }
\end{align}
where $\tilde{b}_{ii}=b_{ii}-\int_{\mathbb{R}_{+}^{d}}z_{i}\mu_{i}(dz)$,
and for $m\in\mathbb{N}$, define
\begin{align}
Y_{m,i}(t)=x_{i}+\int_{0}^{t}\bigg(\eta_{i}+\tilde{b}_{ii}Y_{m,i}(s)\wedge m+\gamma_{i}(Y_{m}(s)\wedge m)\bigg)ds+\int_{0}^{t}\sqrt{2\sigma_{i}Y_{m,i}(s)\wedge m}dW_{i}(s).\label{nonattainment of CBII truncated diffusion}
\end{align}
Define $\tau_{m}:=\inf\{t\geq0:\max_{1\leq i\leq d}Y_{m,i}(t)>m\}$.
As shown in the proof of Theorem \ref{pathwise unique strong solution},
both equations (\ref{nonattainment of CBII diffusion }) and (\ref{nonattainment of CBII truncated diffusion})
have a pathwise unique strong solution and $Y_{m}(t)=Y(t)$ on $[0,\tau_{m})$.
By interlacing argument used in \citep[Theorem 4.9.1]{MR1011252},
the following equation also has a pathwise unique strong solution:
for $i=1,\ldots,d$,
\begin{align*}
Y_{m,i}^{\prime}(t) & =x_{i}+\int_{0}^{t}\bigg(\eta_{i}+\tilde{b}_{ii}Y_{m,i}^{\prime}(s)\wedge m+\gamma_{i}(Y_{m}^{\prime}(s)\wedge m)\bigg)ds+\int_{0}^{t}\sqrt{2\sigma_{i}Y_{m,i}^{\prime}(s)\wedge m}dW_{i}(s)\\
 & +\int_{0}^{t}\int_{\mathbb{R}_{+}^{d}}z_{i}N_{0}(ds,dz)+\sum_{j=1}^{d}\int_{0}^{t}\int_{\mathbb{R}_{+}}\int_{\mathbb{R}_{+}^{d}}z_{i}\mathbbm{1}_{\{u\leq Y_{m,j}^{\prime}(s-)\wedge m\}}N_{j}\left(ds,du,dz\right).
\end{align*}
Set $\tau_{m}^{\prime}:=\inf\{t\geq0:\max_{1\leq i\leq d}Y_{m,i}^{\prime}(t)>m\}$,
then $Y_{m}^{\prime}(t)=X(t)$ on $[0,\tau_{m}^{\prime})$, by pathwise
uniqueness and the gluing argument in \citep[Proposition 2.4]{Fuzongfei2010spa}.

Define $\sigma:=\inf\{t\geq0:Y(t)\in\partial\mathbb{R}_{+}^{d}\}$,
according to Theorem 3.2, we have $\mathbb{P}(\sigma<\infty)=1$.
Then there exists $T>0$ such that $\mathbb{P}(\sigma\leq T)>0$.
Since $\tau_{m}\nearrow+\infty$ as $m\to\infty$, there exists $M>0$
such that $\mathbb{P}(\sigma\leq T<\tau_{M})>0$.

Define $A:=\{\sigma\leq T<\tau_{M}\}$ and
\begin{equation}
B:=\bigg\{ N_{0}\big((0,T]\times\mathbb{R}_{+}^{d}\big)+\sum_{j=1}^{d}N_{j}\big((0,T]\times[0,M]\times\mathbb{R}_{+}^{d}\big)=0\bigg\},\label{eq: event B}
\end{equation}
where $\mathbb{P}(B)>0$ because $\nu,\mu_{1},\ldots,\mu_{d}$ are
finite measures. Notice that $W$ is independent of $N_{0},N_{1},\ldots,N_{d}$,
we see that $A$ and $B$ are independent. Therefore,
\[
\mathbb{P}(A\cap B)=\mathbb{P}(A)\mathbb{P}(B)>0.
\]
Next we prove that on $A\cap B$, $Y(t)=Y_{M}(t)=Y_{M}^{\prime}(t)=X(t)$
for $t\in[0,T]$.

On the event $A$, since $T<\tau_{M}$, we have $Y(t)=Y_{M}(t)$ for
$t\in[0,T]$. On the event $B$, in view of the interlacing construction
in the proof of \citep[Theorem 4.9.1]{MR1011252} and \eqref{eq: event B}
as well as the fact that compensator
effect of $\tilde{N}_{i}$ has already been absorbed into the modified
drift $\tilde{b}_{ii}=b_{ii}-\int_{\mathbb{R}_{+}^{d}}z_{i}\mu_{i}(dz)$,
we obtain $Y_{M}(t)=Y_{M}^{\prime}(t)$ for $t\in[0,T]$. Hence on
$A\cap B$, $Y_{M,i}^{\prime}(t)<M$ for $t\in[0,T]$, $i=1,\ldots,d$
and thus $T<\tau_{M}^{\prime}$, which implies that on $A\cap B$,
$Y_{M}^{\prime}(t)=X(t)$ for $t\in[0,T]$. So on $A\cap B$, we have
$Y(t)=Y_{M}(t)=Y_{M}^{\prime}(t)=X(t)$ for $t\in[0,T]$.

On $A\cap B$, since $\sigma\leq T$, we see that $X(t)=Y(t)\in\partial\mathbb{R}_{+}^{d}$
for some $t\le T$. The assertion is proved.
\end{proof}
We next show that if the interaction is competitive, then the condition
$\eta_{i}<\sigma_{i}/2$ in Theorems \ref{attainment behavior}
and \ref{attainment behavior 2} can be slightly relaxed.

\begin{Theorem} Consider the $d$-dimensional CIMBI process given
by \eqref{ndimCBIC2}, where $x_{i}>0$ and $\sigma_{i}>0$ for each
$i=1,\ldots,d$. Assume that $\eta_{i}<\sigma_{i}$ for each $i=1,\ldots,d$
and the interaction is competitive, that is, $c_{ij}\le0$ for all
$i\neq j$. If $\nu,\mu_{1},\ldots,\mu_{d}$ are finite measures,
then $\mathbb{P}[X(t)\in\partial\mathbb{R}_{+}^{d}\enspace\mbox{for some }\enspace t>0]>0$.
Moreover, if $\nu,\mu_{1},\ldots,\mu_{d}$ are all zero, then $\mathbb{P}[X(t)\in\partial\mathbb{R}_{+}^{d}\enspace\mbox{for some }\enspace t>0]=1$.
\end{Theorem}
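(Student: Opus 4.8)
The plan is to reduce both assertions to a one-dimensional hitting problem, exploiting that under competitive interaction the drift of each coordinate is dominated by a single-type logistic mechanism. The crucial elementary observation is that since $c_{ij}\le 0$ for all $i\neq j$ and $c_{ii}<0$, for every $x\in\mathbb{R}_+^d$ one has
\[
\gamma_i(x)=x_i\sum_{j=1}^d c_{ij}x_j\le c_{ii}x_i^2\le 0 ,
\]
so the $i$-th coordinate of any CIMBI process has drift bounded above by $\eta_i+b_{ii}x_i+c_{ii}x_i^2$. It therefore suffices to make a single coordinate hit $0$.

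First I would treat the pure-diffusion case (all measures zero). Fix one index, say $i=1$, and let $\hat X_1$ solve the one-dimensional logistic Feller equation
\[
d\hat X_1 = \bigl(\eta_1+b_{11}\hat X_1+c_{11}\hat X_1^2\bigr)\,dt+\sqrt{2\sigma_1\hat X_1}\,dW_1,\qquad \hat X_1(0)=x_1,
\]
driven by the same Brownian motion. By the same one-dimensional comparison argument used in the proof of Theorem \ref{attainment behavior} (as in \citep[Theorem 2.2]{cattiaux2010competitive}), the displayed bound gives $X_1(t)\le \hat X_1(t)$ up to the first boundary-hitting time of $X$. I would then show that $\hat X_1$ reaches $0$ in finite time almost surely: writing the scale density $s'(y)\propto y^{-\eta_1/\sigma_1}\exp\!\bigl(-b_{11}y/\sigma_1-c_{11}y^2/(2\sigma_1)\bigr)$, the hypothesis $\eta_1<\sigma_1$ gives $\int_{0}s'<\infty$, so $0$ is an accessible (regular or exit) boundary, while $c_{11}<0$ forces $s(\infty)=+\infty$, so $+\infty$ is inaccessible. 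By Feller's boundary classification the diffusion cannot escape to $+\infty$ and hence hits $0$ almost surely in finite time, irrespective of the sign of $b_{11}$. Denoting by $\sigma$ the first boundary-hitting time of $X$ and by $T_0$ that of $\hat X_1$, the comparison forces $\sigma\le T_0<\infty$ a.s. (otherwise $0<X_1(T_0)\le\hat X_1(T_0)=0$), which is exactly the almost-sure assertion in the zero-measure case.

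For the finite-measure case I would reuse the interlacing/coupling scheme from the proof of Theorem \ref{attainment behavior 2} essentially verbatim. Let $Y$ be the auxiliary pure diffusion \eqref{nonattainment of CBII diffusion } with $\tilde b_{ii}=b_{ii}-\int z_i\mu_i(dz)$ and the same competitive $\gamma$. The previous paragraph, applied with $b_{ii}$ replaced by $\tilde b_{ii}$, shows that $Y$ hits $\partial\mathbb{R}_+^d$ almost surely; the point is that the intra-specific term $c_{ii}\hat Y_i^2$ with $c_{ii}<0$ controls the growth, so the sign restriction $\tilde b_{ii}<0$ of Theorem \ref{attainment behavior 2}(1) is no longer needed, and $\eta_i<\sigma_i$ replaces $\eta_i\le\sigma_i/2$. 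Choosing $T$ and $M$ with $\mathbb{P}(\sigma\le T<\tau_M)>0$ and intersecting with the positive-probability no-jump event $B$ of \eqref{eq: event B}, the independence of $W$ and the Poisson measures together with the interlacing construction yield $X=Y$ on $[0,T]$ on $A\cap B$, whence $X$ reaches $\partial\mathbb{R}_+^d$ with positive probability.

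The main obstacle is the one-dimensional step: proving that the logistic Feller diffusion with immigration genuinely attains $0$ in finite time almost surely throughout the full range $0\le\eta_1<\sigma_1$ and for arbitrary $b_{11}$. This needs the finer boundary analysis (accessibility of $0$ from the scale function, together with inaccessibility of $+\infty$ coming from $c_{11}<0$) rather than the cruder $\eta_i\le\sigma_i/2$ estimate that made the Harris-recurrence comparison of Theorem \ref{attainment behavior} work. A secondary technical point is justifying the comparison theorem for a state-dependent drift bound together with the merely H\"older-$1/2$ diffusion coefficient $\sqrt{2\sigma_1 y}$, which is handled by the Yamada--Watanabe-type pathwise uniqueness already invoked for \eqref{ndimCBIC}.
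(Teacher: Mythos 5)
Your proposal is correct and takes essentially the same approach as the paper: under competitive interaction each coordinate is dominated (via a comparison principle) by a one-dimensional logistic Feller diffusion with immigration, that diffusion hits zero almost surely whenever $\eta_i<\sigma_i$, and the finite-measure case is reduced to the pure-diffusion case by reusing the interlacing/no-jump-event argument from the proof of Theorem \ref{attainment behavior 2}, exactly as the paper does. The only substantive difference is that the paper simply cites \citep[Remark 9, page 44]{MR3496029} for the one-dimensional hitting result while you re-derive it by Feller's boundary classification; that derivation is sound, except that accessibility of $0$ requires the full Feller test (integrating the scale increment against the speed measure), not merely $\int_{0}s'<\infty$, although this test is indeed satisfied here when $\eta_1<\sigma_1$.
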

\begin{proof}
It suffices to prove that $X(t)$ hits the boundary $\partial\mathbb{R}_{+}^{d}$
almost surely when $\nu,\mu_{1},\ldots,\mu_{d}$ are all zero, since
the argument in the proof of Theorem 3.4 applies here and the rest
of the assertions follow.

Let $Y(t)$ be a single type CBI process with competition given by
\[
Y_{i}(t)=x_{i}+\int_{0}^{t}\bigg(\eta_{i}+b_{ii}Y_{i}(s)+c_{ii}(Y_{i}(s))^{2}\bigg)ds+\int_{0}^{t}\sqrt{2\sigma_{i}Y_{i}(s)}dW_{i},\quad t\ge0,\,i=1,\ldots,d.
\]
Then $X_{i}(t)\leq Y_{i}(t)$, $t\ge0,\,i=1,\ldots,d$. This comparison
principle can be obtained using the same idea as in \citep[Proposition 4.2]{MR3208120},
where a two-type CBI process with intra-specific competition is shown
to be dominated by a two-type CBI; see also \citep[Theorem 2.2]{MR4195178}.
In fact, the proofs in \citet{MR3208120,MR4195178} work here with
obvious adaptations due to the fact that $c_{ij}\leq0$ for $i\neq j$.

According to \citep[Remark 9, page 44]{MR3496029}, $Y_{i}(t)$ hits
zero almost surely if $\eta_{i}<\sigma_{i}$, so $X(t)$ hits the
boundary $\partial\mathbb{R}_{+}^{d}$ in finite time with probability
one. The assertion is proved.
\end{proof}

\section*{Acknowledgment}

We would  like to thank the anonymous referee for his/her valuable comments and suggestions, particularly for pointing out one error in Theorem \ref{attainment behavior} of the first version of this work.  Our work was supported by the Guangdong Provincial Key Laboratory
of IRADS (2022B1212010006, open project code: UICR0600008-2).


\begin{thebibliography}{33}
\expandafter\ifx\csname natexlab\endcsname\relax\def\natexlab#1{#1}\fi
\providecommand{\url}[1]{\texttt{#1}}
\providecommand{\href}[2]{#2}
\providecommand{\path}[1]{#1}
\providecommand{\DOIprefix}{doi:}
\providecommand{\ArXivprefix}{arXiv:}
\providecommand{\URLprefix}{URL: }
\providecommand{\Pubmedprefix}{pmid:}
\providecommand{\doi}[1]{\href{http://dx.doi.org/#1}{\path{#1}}}
\providecommand{\Pubmed}[1]{\href{pmid:#1}{\path{#1}}}
\providecommand{\bibinfo}[2]{#2}
\ifx\xfnm\relax \def\xfnm[#1]{\unskip,\space#1}\fi
\bibitem[{Ba and Pardoux(2015)}]{MR3414448}
\bibinfo{author}{Ba, M.}, \bibinfo{author}{Pardoux, E.}, \bibinfo{year}{2015}.
\newblock \bibinfo{title}{Branching processes with interaction and a
  generalized {R}ay-{K}night theorem}.
\newblock \bibinfo{journal}{Ann. Inst. Henri Poincar\'e{} Probab. Stat.}
  \bibinfo{volume}{51}, \bibinfo{pages}{1290--1313}.
\bibitem[{Berestycki et~al.(2018)Berestycki, Fittipaldi and
  Fontbona}]{MR3877546}
\bibinfo{author}{Berestycki, J.}, \bibinfo{author}{Fittipaldi, M.C.},
  \bibinfo{author}{Fontbona, J.}, \bibinfo{year}{2018}.
\newblock \bibinfo{title}{Ray-{K}night representation of flows of branching
  processes with competition by pruning of {L}\'evy trees}.
\newblock \bibinfo{journal}{Probab. Theory Related Fields}
  \bibinfo{volume}{172}, \bibinfo{pages}{725--788}.
\bibitem[{Caballero et~al.(2013)Caballero, P\'{e}rez~Garmendia and
  Uribe~Bravo}]{CPU13}
\bibinfo{author}{Caballero, M.E.}, \bibinfo{author}{P\'{e}rez~Garmendia, J.L.},
  \bibinfo{author}{Uribe~Bravo, G.}, \bibinfo{year}{2013}.
\newblock \bibinfo{title}{A {L}amperti-type representation of continuous-state
  branching processes with immigration}.
\newblock \bibinfo{journal}{Ann. Probab.} \bibinfo{volume}{41},
  \bibinfo{pages}{1585--1627}.
\bibitem[{Cattiaux and M{\'e}l{\'e}ard(2010)}]{cattiaux2010competitive}
\bibinfo{author}{Cattiaux, P.}, \bibinfo{author}{M{\'e}l{\'e}ard, S.},
  \bibinfo{year}{2010}.
\newblock \bibinfo{title}{Competitive or weak cooperative stochastic
  Lotka--Volterra systems conditioned on non-extinction}.
\newblock \bibinfo{journal}{Journal of mathematical biology}
  \bibinfo{volume}{60}, \bibinfo{pages}{797--829}.
\bibitem[{Chaumont and Marolleau(2023)}]{MR4575008}
\bibinfo{author}{Chaumont, L.}, \bibinfo{author}{Marolleau, M.},
  \bibinfo{year}{2023}.
\newblock \bibinfo{title}{Extinction times of multitype continuous-state
  branching processes}.
\newblock \bibinfo{journal}{Ann. Inst. Henri Poincar\'e{} Probab. Stat.}
  \bibinfo{volume}{59}, \bibinfo{pages}{563--577}.
\bibitem[{Cho et~al.(2012)Cho, Kim and Park}]{MR2853532}
\bibinfo{author}{Cho, S.}, \bibinfo{author}{Kim, P.}, \bibinfo{author}{Park,
  H.}, \bibinfo{year}{2012}.
\newblock \bibinfo{title}{Two-sided estimates on {D}irichlet heat kernels for
  time-dependent parabolic operators with singular drifts in
  {$C^{1,\alpha}$}-domains}.
\newblock \bibinfo{journal}{J. Differential Equations} \bibinfo{volume}{252},
  \bibinfo{pages}{1101--1145}.
\bibitem[{Duhalde et~al.(2014)Duhalde, Foucart and Ma}]{DFM14}
\bibinfo{author}{Duhalde, X.}, \bibinfo{author}{Foucart, C.},
  \bibinfo{author}{Ma, C.}, \bibinfo{year}{2014}.
\newblock \bibinfo{title}{On the hitting times of continuous-state branching
  processes with immigration}.
\newblock \bibinfo{journal}{Stochastic Process. Appl.} \bibinfo{volume}{124},
  \bibinfo{pages}{4182--4201}.
\bibitem[{Fittipaldi and Palau(2025)}]{Fittipaldi16082025}
\bibinfo{author}{Fittipaldi, M.C.}, \bibinfo{author}{Palau, S.},
  \bibinfo{year}{2026}.
\newblock \bibinfo{title}{On multitype branching processes with interaction}.
\newblock \bibinfo{journal}{Stochastics} \bibinfo{volume}{98},
  \bibinfo{pages}{73--94}.
\bibitem[{Foucart(2019)}]{MR3940763}
\bibinfo{author}{Foucart, C.}, \bibinfo{year}{2019}.
\newblock \bibinfo{title}{Continuous-state branching processes with
  competition: duality and reflection at infinity}.
\newblock \bibinfo{journal}{Electron. J. Probab.} \bibinfo{volume}{24},
  \bibinfo{pages}{Paper No. 33, 38 pp}.
\bibitem[{Foucart and Uribe~Bravo(2014)}]{FU14}
\bibinfo{author}{Foucart, C.}, \bibinfo{author}{Uribe~Bravo, G.},
  \bibinfo{year}{2014}.
\newblock \bibinfo{title}{Local extinction in continuous-state branching
  processes with immigration}.
\newblock \bibinfo{journal}{Bernoulli} \bibinfo{volume}{20},
  \bibinfo{pages}{1819--1844}.
\bibitem[{Friesen et~al.(2020)Friesen, Jin and R\"udiger}]{MR4195178}
\bibinfo{author}{Friesen, M.}, \bibinfo{author}{Jin, P.},
  \bibinfo{author}{R\"udiger, B.}, \bibinfo{year}{2020}.
\newblock \bibinfo{title}{On the boundary behavior of multi-type
  continuous-state branching processes with immigration}.
\newblock \bibinfo{journal}{Electron. Commun. Probab.} \bibinfo{volume}{25},
  \bibinfo{pages}{Paper No. 84, 14 pp}.
\bibitem[{Fu and Li(2010)}]{Fuzongfei2010spa}
\bibinfo{author}{Fu, Z.}, \bibinfo{author}{Li, Z.}, \bibinfo{year}{2010}.
\newblock \bibinfo{title}{Stochastic equations of non-negative processes with
  jumps}.
\newblock \bibinfo{journal}{Stochastic Process. Appl.}
  \bibinfo{volume}{120}, \bibinfo{pages}{306--330}.
\bibitem[{Grey(1974)}]{G74}
\bibinfo{author}{Grey, D.R.}, \bibinfo{year}{1974}.
\newblock \bibinfo{title}{Asymptotic behaviour of continuous time, continuous
  state-space branching processes}.
\newblock \bibinfo{journal}{J. Appl. Probab.} \bibinfo{volume}{11},
  \bibinfo{pages}{669--677}.
\bibitem[{Ikeda and Watanabe(1989)}]{MR1011252}
\bibinfo{author}{Ikeda, N.}, \bibinfo{author}{Watanabe, S.},
  \bibinfo{year}{1989}.
\newblock \bibinfo{title}{Stochastic differential equations and diffusion
  processes}. volume~\bibinfo{volume}{24} of
  \textit{\bibinfo{series}{North-Holland Mathematical Library}}.
\newblock \bibinfo{edition}{Second} ed., \bibinfo{publisher}{North-Holland
  Publishing Co., Amsterdam; Kodansha, Ltd., Tokyo}.
\bibitem[{Kawazu and Watanabe(1971)}]{MR290475}
\bibinfo{author}{Kawazu, K.}, \bibinfo{author}{Watanabe, S.},
  \bibinfo{year}{1971}.
\newblock \bibinfo{title}{Branching processes with immigration and related
  limit theorems}.
\newblock \bibinfo{journal}{Teor. Verojatnost. i Primenen.}
  \bibinfo{volume}{16}, \bibinfo{pages}{34--51}.
\bibitem[{Kim and Song(2006)}]{MR2247841}
\bibinfo{author}{Kim, P.}, \bibinfo{author}{Song, R.}, \bibinfo{year}{2006}.
\newblock \bibinfo{title}{Two-sided estimates on the density of {B}rownian
  motion with singular drift}.
\newblock \bibinfo{journal}{Illinois J. Math.} \bibinfo{volume}{50},
  \bibinfo{pages}{635--688}.
\bibitem[{Lambert(2005)}]{MR2134113}
\bibinfo{author}{Lambert, A.}, \bibinfo{year}{2005}.
\newblock \bibinfo{title}{The branching process with logistic growth}.
\newblock \bibinfo{journal}{Ann. Appl. Probab.} \bibinfo{volume}{15},
  \bibinfo{pages}{1506--1535}.
\bibitem[{Le(2022)}]{MR4383204}
\bibinfo{author}{Le, V.}, \bibinfo{year}{2022}.
\newblock \bibinfo{title}{On the extinction of continuous state branching
  processes with competition}.
\newblock \bibinfo{journal}{Statist. Probab. Lett.} \bibinfo{volume}{185},
  \bibinfo{pages}{Paper No. 109410, 7 pp}.
\bibitem[{Le and Pardoux(2020)}]{MR4139087}
\bibinfo{author}{Le, V.}, \bibinfo{author}{Pardoux, E.}, \bibinfo{year}{2020}.
\newblock \bibinfo{title}{Extinction time and the total mass of the
  continuous-state branching processes with competition}.
\newblock \bibinfo{journal}{Stochastics} \bibinfo{volume}{92},
  \bibinfo{pages}{852--875}.
\bibitem[{Leman and Pardo(2021)}]{MR4332223}
\bibinfo{author}{Leman, H.}, \bibinfo{author}{Pardo, J.C.},
  \bibinfo{year}{2021}.
\newblock \bibinfo{title}{Extinction time of logistic branching processes in a
  {B}rownian environment}.
\newblock \bibinfo{journal}{ALEA Lat. Am. J. Probab. Math. Stat.}
  \bibinfo{volume}{18}, \bibinfo{pages}{1859--1890}.
\bibitem[{Li(2019)}]{MR3980150}
\bibinfo{author}{Li, P.S.}, \bibinfo{year}{2019}.
\newblock \bibinfo{title}{A continuous-state polynomial branching process}.
\newblock \bibinfo{journal}{Stochastic Process. Appl.} \bibinfo{volume}{129},
  \bibinfo{pages}{2941--2967}.
\bibitem[{Li et~al.(2025)Li, Li, Wang and Zhou}]{MR4863048}
\bibinfo{author}{Li, P.S.}, \bibinfo{author}{Li, Z.}, \bibinfo{author}{Wang,
  J.}, \bibinfo{author}{Zhou, X.}, \bibinfo{year}{2025}.
\newblock \bibinfo{title}{Exponential ergodicity of branching processes with
  immigration and competition}.
\newblock \bibinfo{journal}{Ann. Inst. Henri Poincar\'e{} Probab. Stat.}
  \bibinfo{volume}{61}, \bibinfo{pages}{350--384}.
\bibitem[{Li et~al.(2024)Li, Wang and Zhou}]{MR4790450}
\bibinfo{author}{Li, P.S.}, \bibinfo{author}{Wang, J.}, \bibinfo{author}{Zhou,
  X.}, \bibinfo{year}{2024}.
\newblock \bibinfo{title}{Quasi-stationary distribution for continuous-state
  branching processes with competition}.
\newblock \bibinfo{journal}{Stochastic Process. Appl.} \bibinfo{volume}{177},
  \bibinfo{pages}{Paper No. 104457, 16 pp}.
\bibitem[{Li et~al.(2019)Li, Yang and Zhou}]{MR3983343}
\bibinfo{author}{Li, P.S.}, \bibinfo{author}{Yang, X.}, \bibinfo{author}{Zhou,
  X.}, \bibinfo{year}{2019}.
\newblock \bibinfo{title}{A general continuous-state nonlinear branching
  process}.
\newblock \bibinfo{journal}{Ann. Appl. Probab.} \bibinfo{volume}{29},
  \bibinfo{pages}{2523--2555}.
\bibitem[{Li(2006)}]{MR2225068}
\bibinfo{author}{Li, Z.}, \bibinfo{year}{2006}.
\newblock \bibinfo{title}{A limit theorem for discrete {G}alton-{W}atson
  branching processes with immigration}.
\newblock \bibinfo{journal}{J. Appl. Probab.} \bibinfo{volume}{43},
  \bibinfo{pages}{289--295}.
\bibitem[{Ma(2014)}]{MR3208120}
\bibinfo{author}{Ma, R.}, \bibinfo{year}{2014}.
\newblock \bibinfo{title}{Stochastic equations for two-type continuous-state
  branching processes with immigration and competition}.
\newblock \bibinfo{journal}{Statist. Probab. Lett.} \bibinfo{volume}{91},
  \bibinfo{pages}{83--89}.
\bibitem[{Ma and Zhou(2024)}]{ma2024explosion}
\bibinfo{author}{Ma, R.}, \bibinfo{author}{Zhou, X.}, \bibinfo{year}{2024}.
\newblock \bibinfo{title}{Explosion of continuous-state branching processes
  with competition in a {L}{\'e}vy environment}.
\newblock \bibinfo{journal}{J. Appl. Probab.}
  \bibinfo{volume}{61}, \bibinfo{pages}{68--81}.
\bibitem[{Ma et~al.(2021)Ma, Yang and Zhou}]{MR4236676}
\bibinfo{author}{Ma, S.}, \bibinfo{author}{Yang, X.}, \bibinfo{author}{Zhou,
  X.}, \bibinfo{year}{2021}.
\newblock \bibinfo{title}{Boundary behaviors for a class of continuous-state
  nonlinear branching processes in critical cases}.
\newblock \bibinfo{journal}{Electron. Commun. Probab.} \bibinfo{volume}{26},
  \bibinfo{pages}{Paper No. 6, 10 pp}.
\bibitem[{Meyn and Tweedie(1993a)}]{1993Stability}
\bibinfo{author}{Meyn, S.P.}, \bibinfo{author}{Tweedie, R.L.},
  \bibinfo{year}{1993}a.
\newblock \bibinfo{title}{Stability of Markovian processes ii: continuous-time
  processes and sampled chains}.
\newblock \bibinfo{journal}{Adv. in Appl. Probab.}
  \bibinfo{volume}{25}, \bibinfo{pages}{487--517}.
\bibitem[{Meyn and Tweedie(1993b)}]{MR1234295}
\bibinfo{author}{Meyn, S.P.}, \bibinfo{author}{Tweedie, R.L.},
  \bibinfo{year}{1993}b.
\newblock \bibinfo{title}{Stability of {M}arkovian processes iii:
  {F}oster-{L}yapunov criteria for continuous-time processes}.
\newblock \bibinfo{journal}{Adv. in Appl. Probab.} \bibinfo{volume}{25},
  \bibinfo{pages}{518--548}.
\bibitem[{Ogura(1970)}]{MR279902}
\bibinfo{author}{Ogura, Y.}, \bibinfo{year}{1970}.
\newblock \bibinfo{title}{Spectral representation for branching processes with
  immigration on the real half line}.
\newblock \bibinfo{journal}{Publ. Res. Inst. Math. Sci.} \bibinfo{volume}{6},
  \bibinfo{pages}{307--321}.
\bibitem[{Pardoux(2016)}]{MR3496029}
\bibinfo{author}{Pardoux, E.}, \bibinfo{year}{2016}.
\newblock \bibinfo{title}{Probabilistic models of population evolution: scaling limits, genealogies and interactions}. 
\newblock \bibinfo{publisher}{Springer, Cham}.

\bibitem[{Ren et~al.(2022)Ren, Xiong, Yang and Zhou}]{MR4419569}
\bibinfo{author}{Ren, Y.X.}, \bibinfo{author}{Xiong, J.},
  \bibinfo{author}{Yang, X.}, \bibinfo{author}{Zhou, X.}, \bibinfo{year}{2022}.
\newblock \bibinfo{title}{On the extinction-extinguishing dichotomy for a
  stochastic {L}otka-{V}olterra type population dynamical system}.
\newblock \bibinfo{journal}{Stochastic Process. Appl.} \bibinfo{volume}{150},
  \bibinfo{pages}{50--90}.

\end{thebibliography}



\end{document}